\documentclass{scrartcl}

\usepackage{amsmath}
\usepackage{amsfonts}
\usepackage{amssymb}
\usepackage{amsthm}

\usepackage{graphicx}
\usepackage[english]{babel}

\usepackage[format=plain,labelfont=bf,textfont=it]{caption}

\theoremstyle{definition}
\newtheorem*{defi}{Definition}
\newtheorem{prop}{Proposition}

\renewcommand{\O}{\mathcal{O}}

\title{Modified equations and the Basel problem}
\author{Mats Vermeeren}
\date{\normalsize \textit{Institut f\"ur Mathematik, MA 7-1, Technische Universit\"at Berlin, \\
Str.\@ des 17.\@ Juni 136, 10623 Berlin, Germany \\
E-mail:} \texttt{vermeeren@math.tu-berlin.de}}

\begin{document}

\maketitle

The Basel problem consists in evaluating the series
\[ \sum_{k=1}^\infty \frac{1}{k^2} . \]
It was tackled by many mathematicians in the 17th century, most notably Pietro Mengoli, Gottfried Wilhelm Leibniz and Jakob Bernoulli \cite[p42]{dunham1999euler}. It was finally solved by Leonhard Euler in 1735. He showed that 
\[ \sum_{k=1}^\infty \frac{1}{k^2} = \frac{\pi^2}{6}, \]
a result that still astonishes everyone who sees it for the first time.

Even though Euler's first proof was not rigorous, it convinced contemporary mathematicians. Since then, many proofs of this identity have been found whose rigor stands up to the scrutiny of modern mathematics. A (non-exhaustive) list of such proofs can be found in \cite{chapman1999evaluating}. Some of them are widely praised for their esthetic value, see \cite[Chapter 8]{aigner2010proofs}.

One proof relies on the series expansion
\begin{equation}\label{arcsin-series}
\left( \arcsin \frac{h}{2} \right)^2 = \frac{1}{2} \sum_{k=1}^\infty  \frac{(k-1)!^2}{(2k)!} h^{2k} .
\end{equation} 
Setting $h=1$ in this expansion, combined with some elementary but nontrivial algebraic manipulations, yields the desired result. However, that approach only relocates the difficulty because proving this expansion is not an easy task. Instead of tackling this problem head on, we will turn our attention to a completely different branch of mathematics. 

Modified equations are a powerful tool from numerical analysis. In the next section, we will introduce this concept and start studying an example. As the discussion progresses, it will gradually become clear that the modified equation in this example is intimately related to the series expansion \eqref{arcsin-series}.

The main goal of this article is to show off the concept of modified equations, a beautiful idea which rarely travels beyond the borders of numerical mathematics. However, the point of this work is not to provide a general overview of this subject. Excellent review texts are already available, for example \cite[Chapter IX]{hairer2006geometric}. The point of this article is to show that modified equations have potential use outside of numerical analysis. To illustrate this, at the end of this paper we will use a modified equation to prove Equation \eqref{arcsin-series} and by corollary the Basel problem. 


\section*{Modified equations}

An important technique to study the behavior of a numerical method for an ODE is \emph{backward error analysis}.  Instead of comparing a discrete solution to a solution of the original ODE, one looks for a \emph{modified equation} whose solutions exactly interpolate the numerical solutions. To analyze the behavior of the numerical integrator one then compares the modified equation with the original one. In general a modified equation involves a formal power series, so to extract information in a rigorous way one needs to truncate it. In many cases useful estimates for the truncation error are available, and very strong results can be proved using modified equations. The most famous one is the fact that symplectic integrators for Hamiltonian systems nearly conserve the energy over very long time intervals. 

Let us illustrate the concept of a modified equation in the context of the linear differential equation 
\[\dot{x}(t) = -x(t), \]
where the dot denotes differentiation with respect to $t$. The dependence of $x$ on $t$ will often be suppressed in what follows. We consider the explicit Euler discretization with step size $h$ of this differential equation, which is obtained by replacing the derivative of $x$ by the finite difference $\frac{x_{j+1} - x_j}{h}$ and $x$ itself by $x_j$. For each integer $j$ the quantity $x_j$ is intended to be an approximation of $x(jh)$. This gives us the difference equation 
\[ x_{j+1} - x_j = - h x_j. \]
Its modified equation is a differential equation $\dot{x} = F_h(x)$ whose solutions satisfy the difference equation, in the sense that $x(t+h) - x(t) = - hx(t)$. The right hand side of the modified equation is a function $F:\mathbb{R}_+ \times \mathbb{R} \rightarrow \mathbb{R}: (h,x) \mapsto F_h(x)$, which we assume to have a power series expansion
\[ F_h(x) = f_0(x) + h f_1(x) + h^2 f_2(x) + \ldots. \]
We will assume that this series converges for the step size $h$ under consideration. In most cases the power series defining a modified equation does not converge for any $h$, but in this paper we will only encounter examples where convergence takes place.

The coefficients $f_j(x)$ of the power series can be determined as follows. Using Taylor expansion we can write the condition that $x(t+h) - x(t) = - hx(t)$ as
\[ h \dot{x}(t) + \frac{h^2}{2} \ddot{x}(t) + \frac{h^3}{6} x^{(3)}(t) + \ldots =  - hx(t) . \]
The higher derivatives can be written as
\begin{align*}
\ddot{x} &= F_h'(x)F_h(x) \\
&= \left( f_0'(x) + h f_1'(x) + h^2 f_2'(x) + \ldots \right) \left( f_0(x) + h f_1(x) + h^2 f_2(x) + \ldots \right), \\
x^{(3)} &= F_h''(x)F_h(x)^2 + F_h'(x)^2 F_h(x) \\
&= \left( f_0''(x) + h f_1''(x) + h^2 f_2''(x) + \ldots \right) \left( f_0(x) + h f_1(x) + h^2 f_2(x) +  \ldots \right)^2 \\
&\qquad + \left( f_0'(x) + h f_1'(x) + h^2 f_2'(x) + \ldots \right)^2 \left( f_0(x) + h f_1(x) + h^2 f_2(x) + \ldots \right) ,
\end{align*}
and so on, where the prime denotes differentiation with respect to $x$. Hence the condition becomes
\begin{align*}
& h \left( f_0 + h f_1 + h^2 f_2 + \ldots \right)\\ 
&\qquad + \frac{h^2}{2} \left( f_0'f_0 + h f_1'f_0 + h f_0'f_1 + \ldots \right) + \frac{h^3}{6} \left( f_0''f_0^2 + f_0'^2 f_0 + \ldots \right) + \ldots = -h x,
\end{align*}
where the argument $x$ of the $f_i$ has been omitted. Grouping terms by order in $h$ we find
\[
h(f_0 + x) + h^2 \left( f_1 + \frac{1}{2}f_0'f_0 \right) + h^3 \left( f_2 + \frac{1}{2}f_1'f_0 + \frac{1}{2} f_0'f_1 + \frac{1}{6} f_0''f_0^2 + \frac{1}{6} f_0'^2 f_0 \right) + \ldots = 0 .
\]
For a power series to be equal to zero, all of the coefficients must be zero. From the first order term we find that $f_0(x) = -x$. From the second order term we then learn that $f_1(x) = - \frac{1}{2} x$, and from the third order term that $f_2(x) = - \frac{1}{3} x$. Hence the modified equation is
\[ \dot{x} = - x - \frac{h}{2} x - \frac{h^2}{3} x - \ldots . \]
We see that the leading order term agrees with the original differential equation. The higher order terms reflect the discretization error.

Since the difference equation $x_{j+1} = x_j - h x_j$ is linear it can be solved exactly and the modified equation doesn't provide any new information. However, the above procedure can be applied just as well to nonlinear difference equations. The only price we pay is that in the nonlinear case the terms tend to become more and more complicated as the order in $h$ increases.

For first order ODEs the notion of a modified equation is well-established \cite[Chapter IX]{hairer2006geometric}. It is easily extended to second order equations \cite{vermeeren2015modified}, which is the relevant setting for this paper.

\begin{defi}
Let $f:\mathbb{R}_+ \times \mathbb{R}^2 \rightarrow \mathbb{R}: (h,x,v) \mapsto F_h(x,v)$ and 
$\Psi:\mathbb{R}_+ \times \mathbb{R}^3 \rightarrow \mathbb{R}: (h,x,y,z) \mapsto \Psi_h(x,y,z)$ be smooth functions. The differential equation $\ddot{x} = F_h(x,\dot{x})$ is a \emph{modified equation} for the second order difference equation $\Psi_h(x_{j-1},x_j,x_{j+1}) = 0$ if for all sufficiently small $h > 0$, every solution $x$ of $\ddot{x} = F_h(x,\dot{x})$ satisfies $\Psi_h \big( x(t-h),x(t),x(t+h) \big) = 0$ for all $t \in \mathbb{R}$.
\end{defi}

In other words, for every solution $x$ of the modified equation, the discrete curve \mbox{$(x(t_0+jh))_{j \in \mathbb{Z}}$} solves the difference equation.
Usually the modified equation is obtained as a formal power series in the step size $h$ that does not converge. To be rigorous in general, the definition should be adapted to handle a right hand side $F_h(x,\dot{x})$ that is a formal power series, but in this paper we will only encounter convergent series.

As an example, consider the differential equation $\ddot{x} = -x$ and its St\"ormer-Verlet discretization, which is obtained by replacing the second derivative of $x$ by the finite difference $\frac{x_{j+1} - 2 x_j + x_{j-1}}{h^2}$. This is a famous example of a \emph{geometric} (i.e.\@ structure-preserving) numerical integrator \cite{hairer2003geometric}. It gives us the difference equation
\begin{equation}\label{sv}
 x_{j+1} - 2 x_j + x_{j-1} = -h^2 x_j. 
\end{equation}
In this particular example the modified equation is unusually simple. It turns out to be sufficient to look for a modified equation of the form
\begin{equation}\label{ansatz}
\ddot{x} = F_h(x) = f_0(x) + h^2 f_2 (x) + h^4 f_4 (x) + \ldots.
\end{equation}
In general we should also include odd order terms and let the $f_i$ depend on $\dot{x}$ as well. It follows from Equation \eqref{ansatz} that
\begin{align}\label{derivatives}
\begin{split}
x^{(3)} &= F_h'\dot{x}, \\
x^{(4)} &= F_h''\dot{x}^2 + F_h'F_h, \\
x^{(5)} &= F_h^{(3)}\dot{x}^3 + 3 F_h''F_h\dot{x} + F_h'^2\dot{x}, \\
x^{(6)} &= F_h^{(4)}\dot{x}^4 + 6 F_h^{(3)}F_h\dot{x}^2 + 5 F_h''F_h'\dot{x}^2 + 3 F_h''F_h^2 + F_h'^2F_h,
\end{split}
\end{align}
where the argument $x$ of $F_h$ has been omitted. We identify $x(t) = x_j$ and $x_{j \pm 1} = x(t \pm h)$. Using Taylor expansion we find
\[
x_{j \pm 1} 
= x \pm h \dot{x} + \frac{h^2}{2} \ddot{x} \pm \frac{h^3}{6} x^{(3)} + \frac{h^4}{24} x^{(4)} \pm \frac{h^5}{120} x^{(5)} + \frac{h^6}{720} x^{(6)} \pm \frac{h^7}{5040} x^{(7)}  + \ldots .
\]
Plugging this into the difference equation \eqref{sv} and replacing derivatives using \eqref{ansatz} and \eqref{derivatives} we find
\begin{align*}
 -h^2 x 
&= h^2 \ddot{x} + \frac{h^4}{12} x^{(4)} + \frac{h^6}{360} x^{(6)} + \ldots \\
&= h^2 (f_0 + h^2 f_2 + h^4 f_4) + \frac{h^4}{12} \left( f_0''\dot{x}^2 + h^2f_2''\dot{x}^2 + f_0'f_0 + h^2 f_0'f_2 + h^2 f_2'f_0 \right) \\
&\qquad + \frac{h^6}{360} \left( f_0^{(4)}\dot{x}^4 + 6 f_0^{(3)}f_0\dot{x}^2 + 5 f_0''f_0'\dot{x}^2 + 3 f_0''f_0^2 + f_0'^2f_0 \right) + \ldots \\
&= h^2 f_0
+ h^4 \left( f_2 + \frac{1}{12} \left( f_0''\dot{x}^2 + f_0'f_0 \right) \right) \\
&\qquad + h^6 \,\bigg( f_4 + \frac{1}{12} \left(f_2''\dot{x}^2 + f_0'f_2 + f_2'f_0 \right) \\
&\hspace{23mm} + \frac{1}{360} \left( f_0^{(4)}\dot{x}^4 + 6 f_0^{(3)}f_0\dot{x}^2 + 5 f_0''f_0'\dot{x}^2 + 3 f_0''f_0^2 + f_0'^2f_0 \right) \bigg) + \ldots .
\end{align*}
The $h^2$-term of this equation gives us $f_0(x)=-x$, hence $f_0' = -1$ and $f_0'' = 0$. The $h^4$-term then reduces to $f_2(x) = -\frac{x}{12}$, hence $f_2' = -\frac{1}{12}$ and $f_2'' = 0$. Finally, the $h^6$-term gives
\[ f_4(x) = - \frac{1}{12}\left(\frac{x}{12}+\frac{x}{12}\right) + \frac{x}{360} = -\frac{x}{90}. \]
Therefore, the modified equation is
\begin{equation}\label{modeqn-truncated}
\ddot{x} = - x - \frac{h^2}{12} x - \frac{h^4}{90} x - \ldots .
\end{equation}

Note that the coefficients $f_j$ are uniquely determined by the above procedure. This shows that if there exists a modified equation that can be represented as a power series 
\[ \ddot{x} = f_0(x,\dot{x}) + h f_1 (x,\dot{x}) + h^2 f_2 (x,\dot{x}) + h^3 f_3 (x,\dot{x}) + \ldots \]
then it is unique. 

\begin{figure}[t]
\includegraphics[width=\linewidth]{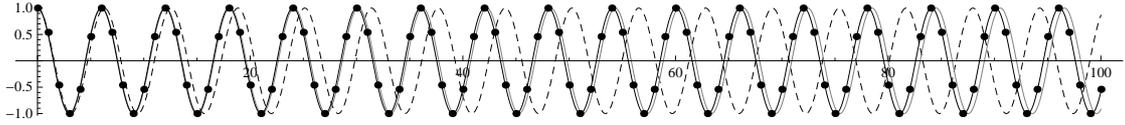}
\caption{The harmonic oscillator. Plotted are an exact solution (dashed curve), a solution of the St\"ormer-Verlet discretization with step size $h=1$ (dots), a solution of the modified equation up to order two (light solid curve), and a solution of the modified equation up to order four (dark solid curve).}\label{fig-harmonic}
\end{figure}

Figure \ref{fig-harmonic} shows a solution of the modified equation truncated after the $h^4$-term. We see that it agrees very well with the discrete flow, despite the large step size. Furthermore, we observe that the St\"ormer-Verlet discretization of the harmonic oscillator with step size $h=1$ is 6-periodic. Since the difference equation has solutions with a period of exactly $6$, so does the modified equation. This suggests that the modified equation is $\ddot{x} = -\frac{\pi^2}{9} x$. 

Because the difference equation \eqref{sv} is linear, this observation is easy to prove. Indeed, we can find an explicit expression for the general solution of the difference equation:
\[
x_j = A e^{- 2 i j \theta} + B e^{2 i j \theta}, 
\]
where $\theta = \arcsin\left(\frac{h}{2}\right)$. Hence we can always find an interpolating curve of the form $x(t) = A e^{- 2 i t \theta/h} + B e^{2 i t \theta/h}$. This curve satisfies the differential equation
\begin{equation}\label{mod-arcsine}
\ddot{x} = - \left(\frac{2}{h} \arcsin\left(\frac{h}{2}\right) \right)^2 x.
\end{equation}
If we set $h=1$ this becomes $\ddot{x} = -\frac{\pi^2}{9} x$.

At this point the connection between modified equations and Equation \eqref{arcsin-series} begins to reveal itself. Our proof of Equation \eqref{arcsin-series} \--- and by corollary of the Basel problem \--- will be based on a comparison of the expressions \eqref{modeqn-truncated} and \eqref{mod-arcsine} of the modified equation. The main difficulty is to find a general expression for the coefficients of the power series in Equation \eqref{modeqn-truncated}.

\section*{Determining the coefficients of (\ref{modeqn-truncated})}

Another consequence of the linearity of the difference equation \eqref{sv} is that the sum $x_{j-k} + x_{j+k}$ can be written as $x_j$ multiplied by a polynomial of degree $2k$ in $h$. More precisely, any solution of \eqref{sv} satisfies
\[
x_{j-k} + x_{j+k} = 2 T_k\!\left(1-\frac{h^2}{2} \right) x_j,
\]
where $T_k$ denotes the $k$-th Chebyshev polynomial of the first kind (proof in the Appendix). This implies that any solution of the modified equation satisfies
\begin{equation}\label{chebyshev}
\begin{split}
x(t-kh)+x(t+kh) &= 2 T_k\!\left(1-\frac{h^2}{2} \right) x(t) \\
&= (-1)^k h^{2k} x(t) + \text{terms of lower order in } h. 
\end{split}
\end{equation}

Now we are in a position to derive an explicit expression for the coefficients of the modified equation \eqref{modeqn-truncated}.\footnote{The argument here is inspired by the derivation in \cite{khan2003mathematical} of suitable coefficients for finite difference methods.}
Fix an arbitrary smooth curve $x$. For every $j$ and $k$ there holds
\[ x(t-jh)-2x(t)+x(t+jh) = (jh)^2 \ddot{x}(t) + \frac{2(jh)^4}{4!} x^{(4)}(t) + \ldots + \frac{2(jh)^{2k}}{(2k)!} x^{(2k)}(t) + \O(h^{2k+2}), \]
or, in matrix form,
\[ \begin{pmatrix}
x(t-h)-2x(t)+x(t+h) \\
x(t-2h)-2x(t)+x(t+2h) \\
\vdots \\
x(t-kh)-2x(t)+x(t+kh)
\end{pmatrix}
=
\begin{pmatrix}
1 & 1 & \ldots & 1 \\
2^2 & 2^4 & \ldots & 2^{2k} \\
\vdots & \vdots & \ddots & \vdots\\
k^2 & k^4 & \ldots & k^{2k} \\
\end{pmatrix}
\begin{pmatrix}
h^2 \ddot{x}(t) \\
\frac{2h^4}{4!} x^{(4)}(t) \\
\vdots \\
\frac{2h^{2k}}{(2k)!} x^{(2k)}(t)
\end{pmatrix} 
 + \O(h^{2k+2}).\]
Using Cramer's rule we solve the above system of linear equations for $h^2 \ddot{x}(t)$,
\[ h^2 \ddot{x}(t) = 
\frac{
\begin{vmatrix}
x(t-h)-2x(t)+x(t+h) & 1 & \ldots & 1 \\
x(t-2h)-2x(t)+x(t+2h) & 2^4 & \ldots & 2^{2k} \\
\vdots & \vdots & \ddots & \vdots\\
x(t-kh)-2x(t)+x(t+kh) & k^4 & \ldots & k^{2k} \\
\end{vmatrix}
}{
\begin{vmatrix}
1 & 1 & \ldots & 1 \\
2^2 & 2^4 & \ldots & 2^{2k} \\
\vdots & \vdots & \ddots & \vdots\\
k^2 & k^4 & \ldots & k^{2k} \\
\end{vmatrix}
}
 + \O(h^{2k+2}).
\]
In the denominator we have a Vandermonde determinant that equals
\[ (k!)^2 \prod_{1 \leq i < j \leq k} (j^2-i^2). \]
The determinant in the numerator is more difficult to evaluate, but if we restrict our attention to the $h^{2k}$-term it becomes just as easy. Assuming the curve $x$ is a solution of the modified equation, we can use Equation \eqref{chebyshev} to find that this term is
\begin{align*}
\begin{vmatrix}
0 & 1 & \ldots & 1 \\
0 & 2^4 & \ldots & 2^{2k} \\
\vdots & \vdots & \ddots & \vdots\\
0 & (k-1)^4 & \ldots & (k-1)^{2k} \\
(-1)^kx(t) & k^4 & \ldots & k^{2k} \\
\end{vmatrix}
&=
-\begin{vmatrix}
1 & \ldots & 1 \\
2^4 & \ldots & 2^{2k} \\
\vdots & \ddots & \vdots\\
(k-1)^4 & \ldots & (k-1)^{2k} \\
\end{vmatrix} x(t) \\
&= - (k-1)!^4 \left( \prod_{1 \leq i < j \leq k-1} (j^2-i^2) \right) x(t).
\end{align*} 
Hence we find that the $h^{2k}$-term of $h^2 \ddot{x}(t)$ equals
\[ - \frac{(k-1)!^4 \prod_{1 \leq i < j \leq k-1} (j^2-i^2)}{(k!)^2 \prod_{1 \leq i < j \leq k} (j^2-i^2)} x(t) , \]
which simplifies to 
\[ -\frac{(k-1)!^2}{k (2k-1)!} x(t). \]
This is the desired explicit expression for the $h^{2k-2}$-term of Equation \eqref{modeqn-truncated}. Hence the modified equation of the difference equation $ x_{j+1} - 2 x_j + x_{j-1} = -h^2 x_j $ is
\begin{equation}\label{mod-series}
\ddot{x} = - \sum_{k=1}^\infty  \frac{2(k-1)!^2}{(2k)!} h^{2k-2} x.
\end{equation}

\section*{Fitting the pieces together}

Equations \eqref{mod-arcsine} and \eqref{mod-series} provide two expressions for the modified equation of the difference equation \eqref{sv}. Since the modified equation, written in the form 
``$ \ddot{x} = \text{power series}$'' is unique, it follows that both expressions coincide:
\[  - \left(\frac{2}{h} \arcsin \frac{h}{2} \right)^2 = - \sum_{k=1}^\infty  \frac{2(k-1)!^2}{(2k)!} h^{2k-2} \]
and hence 
\[\left( \arcsin \frac{h}{2} \right)^2 = \frac{1}{2} \sum_{k=1}^\infty  \frac{(k-1)!^2}{(2k)!} h^{2k} .\]
This expansion is relatively well-known. However, the proofs usually found in the literature\footnote{e.g.\@ the ones sketched in \cite[p. 384-385]{borwein1987pi} and \cite[p. 271]{knopp1954theory}.} are quite complicated. Plugging in $h=1$ we find
\begin{equation}\label{pipi18} 
\sum_{k=1}^\infty \frac{(k-1)!^2}{(2k)!} = \frac{\pi^2}{18} .
\end{equation}
By elementary but nontrivial calculations (presented in the Appendix, following \cite[p. 265-266]{knopp1954theory}) one can show that
\[ \sum_{k=1}^\infty \frac{1}{k^2} = 3 \sum_{k=1}^\infty  \frac{(k-1)!^2}{(2k)!}, \]
which leads to the conclusion that
\[ \sum_{k=1}^\infty \frac{1}{k^2} = \frac{\pi^2}{6}. \]

It is worth mentioning that the closed-form expression \eqref{mod-arcsine} of the modified equation in terms of the arcsine is not needed to arrive at this result. Indeed one can derive Equation \eqref{pipi18} from Equation \eqref{mod-series} and the fact that solutions of the difference equation \eqref{sv} are 6-periodic for $h=1$. The serendipitous observation of this periodicity was the inspiration for our unconventional solution of the Basel problem.


\section*{Conclusion}

Modified equations are an important tool in numerical analysis. One can think of their study as reversing the discretization: one looks for a differential equation for which the discretization would have been exact. This continuous system then provides information about the discrete one.

Usually, modified equations are given by formal power series. When for a specific example the power series does converge, the properties of the discrete system can be useful to evaluate it. This reverses the direction of thinking once more: we are back to using the discrete system to learn about the continuous one. 
However, in this context the discrete system is not a mere approximation of some continuous object, but leads us to an exact evaluation of a power series.

At the moment, the scope of this method is limited to the example presented above. Nevertheless, it is very pleasing to find a connection between a problem that has fascinated mathematicians for centuries and the relatively new concept of modified equations for numerical integrators for ODEs.

\bigskip\noindent \textbf{Acknowledgments.} The author is grateful to the numerous people who gave constructive criticism one some draft of this work, in particular Yuri Suris and the anonymous referees.

The author is supported by the DFG Collaborative Research Center TRR 109 ``Discretization in Geometry and Dynamics''.

\appendix
\section*{Appendix}

\begin{prop}
Let $T_k$ denote the $k$-th Chebyshev polynomial of the first kind. Any solution of the difference equation $ x_{j+1} - 2 x_j + x_{j-1} = -h^2 x_j $ satisfies
\[ x_{j-k} + x_{j+k} = 2 T_k \!\left(1-\frac{h^2}{2} \right) x_j. \]
\end{prop}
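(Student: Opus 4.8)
The plan is to solve the difference equation explicitly and then recognize the resulting expression as a Chebyshev polynomial evaluated at $1 - \tfrac{h^2}{2}$. Write the recurrence $x_{j+1} - 2x_j + x_{j-1} = -h^2 x_j$ as the linear recursion $x_{j+1} = (2 - h^2)x_j - x_{j-1}$, whose characteristic equation is $\lambda^2 - (2-h^2)\lambda + 1 = 0$. For small $h > 0$ the roots are complex of modulus one, say $\lambda_\pm = e^{\pm 2i\theta}$ with $2\cos 2\theta = 2 - h^2$, i.e.\ $\cos 2\theta = 1 - \tfrac{h^2}{2}$ (equivalently $\sin\theta = \tfrac{h}{2}$, matching the notation used earlier in the paper). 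Then $x_j = A\lambda_+^j + B\lambda_-^j$ for suitable constants $A,B$ determined by $x_0, x_1$.

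Next I would compute $x_{j-k} + x_{j+k}$ directly from this formula:
\[ x_{j-k} + x_{j+k} = A\lambda_+^j(\lambda_+^{-k} + \lambda_+^{k}) + B\lambda_-^j(\lambda_-^{-k} + \lambda_-^{k}) = 2\cos(2k\theta)\,\big(A\lambda_+^j + B\lambda_-^j\big) = 2\cos(2k\theta)\, x_j, \]
where I used $\lambda_\pm^{k} + \lambda_\pm^{-k} = 2\cos(2k\theta)$ for both roots. It then remains to identify $\cos(2k\theta)$ with $T_k\!\left(1 - \tfrac{h^2}{2}\right)$. This is exactly the defining property of the Chebyshev polynomials of the first kind: $T_k(\cos\phi) = \cos(k\phi)$ for all $\phi$. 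Applying this with $\phi = 2\theta$ and recalling $\cos 2\theta = 1 - \tfrac{h^2}{2}$ gives $\cos(2k\theta) = T_k(\cos 2\theta) = T_k\!\left(1 - \tfrac{h^2}{2}\right)$, which completes the argument.

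A cleaner alternative that avoids case distinctions is to argue purely algebraically: define $p_k := x_{j-k} + x_{j+k}$ (for fixed $j$) and show $p_k$ satisfies the three-term recurrence $p_{k+1} = (2 - h^2)p_k - p_{k-1}$ with $p_0 = 2x_j$ and $p_1 = (2-h^2)x_j$, by adding the difference equation at indices $j+k$ and $j-k$. Since the Chebyshev polynomials satisfy $T_{k+1}(w) = 2w\,T_k(w) - T_{k-1}(w)$ with $T_0 = 1$, $T_1(w) = w$, setting $w = 1 - \tfrac{h^2}{2}$ (so $2w = 2 - h^2$) shows $q_k := 2T_k(w)x_j$ obeys the same recurrence and initial data; hence $p_k = q_k$ for all $k$ by induction. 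The main (minor) obstacle is just bookkeeping: verifying that adding the difference equation at $j \pm k$ produces precisely the stated recurrence for $p_k$, and lining up the normalizations $2w = 2 - h^2$ correctly. I would present this second, induction-based proof, as it is short, requires no assumption on the sign or size of $h$, and only uses the standard recurrence for $T_k$.
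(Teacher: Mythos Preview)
Your first approach is essentially the paper's own proof: the paper writes $x_j = A e^{-2ij\theta} + B e^{2ij\theta}$ with $\theta = \arcsin(h/2)$, computes $x_{j-k}+x_{j+k} = 2\cos(2k\theta)\,x_j$, and then invokes the trigonometric definition $T_k(\cos\phi)=\cos k\phi$ together with $\cos 2\theta = 1 - h^2/2$.

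Your preferred second route, the induction via the Chebyshev three-term recurrence, is correct and genuinely different from what the paper does. Adding the difference equation at indices $j+k$ and $j-k$ indeed yields $p_{k+1} = (2-h^2)p_k - p_{k-1}$, and the initial data $p_0 = 2x_j$, $p_1 = (2-h^2)x_j$ match those of $2T_k(w)x_j$ with $w = 1 - h^2/2$. This argument is more elementary: it uses only the recurrence definition of $T_k$, never the explicit solution of the difference equation, and goes through unchanged for all $h$ (including $h \geq 2$, where the characteristic roots are real and the trigonometric parametrization would need to be replaced by $\cosh$). The paper's approach, on the other hand, is slightly quicker given that the explicit form $x_j = A e^{-2ij\theta} + B e^{2ij\theta}$ is already available from the surrounding text, and it makes the link to the $\arcsin$ form of the modified equation transparent.
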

\begin{proof}
As observed in the main text, solutions of the difference equation are of the form 
\[ x_j = A e^{- 2 i j \theta} + B e^{2 i j \theta}, \]
where $\theta = \arcsin\left(\frac{h}{2}\right)$. Therefore,
\begin{align*}
x_{j-k} + x_{j+k} &= A e^{- 2 i (j-k) \theta} + B e^{2 i (j-k) \theta} +A e^{- 2 i (j+k) \theta} + B e^{2 i (j+k) \theta} \\
&= A e^{- 2 i j \theta} \left( e^{2 i k \theta} + e^{-2 i k \theta}\right) + B e^{2 i j \theta} \left( e^{-2 i k \theta} + e^{2 i k \theta}\right) \\
&=  2 \cos(2 k \theta) x_j .
\end{align*}
Note that
\[ \cos 2 \theta = 1 - 2 \sin^2 \theta = 1 - 2\left(\frac{h}{2}\right)^2 = 1 - \frac{h^2}{2}, \]
hence, by the trigonometric definition of the Chebyshev polynomials, 
\[ \cos(2 k \theta) = T_k\left(1 - \frac{h^2}{2}\right). \qedhere \]
\end{proof}

\begin{prop}
\[ \sum_{k=1}^\infty \frac{1}{k^2} = 3 \sum_{k=1}^\infty \frac{(k-1)!^2}{(2k)!}. \]
\end{prop}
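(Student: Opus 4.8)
The plan is to turn the right-hand side into an integral, apply one well-chosen algebraic identity, and reduce everything to the alternating series $\sum_{n\ge 1}(-1)^{n-1}/n^2$, which is related to $\sum_{n\ge 1}1/n^2$ by a one-line parity split. Crucially, $\pi^2/6$ is never used, so the identity is genuinely independent of the earlier part of the paper; combined with \eqref{pipi18} it then completes the evaluation of the Basel series.

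First I would record the Beta-integral identity
\[ \frac{(k-1)!^2}{(2k)!} = \frac1k\int_0^1 t^{k-1}(1-t)^k\,\d t , \]
which follows from $\int_0^1 t^{k-1}(1-t)^k\,\d t = \frac{(k-1)!\,k!}{(2k)!}$. Summing over $k$ and interchanging sum and integral — legitimate because all summands are nonnegative on $(0,1)$, so this is monotone convergence, and $t(1-t)\le\tfrac14<1$ keeps the geometric-type series under control — gives
\[ \sum_{k=1}^\infty \frac{(k-1)!^2}{(2k)!} = \int_0^1 \frac1t \sum_{k=1}^\infty \frac{\bigl(t(1-t)\bigr)^k}{k}\,\d t = -\int_0^1 \frac{\ln\bigl(1-t+t^2\bigr)}{t}\,\d t . \]

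The one clever step is the factorization $1-t+t^2 = \dfrac{1+t^3}{1+t}$, which splits the integral as $\int_0^1\frac{\ln(1+t)}{t}\,\d t - \int_0^1\frac{\ln(1+t^3)}{t}\,\d t$. In the second integral the substitution $s=t^3$ (so $\frac{\d t}{t}=\frac{\d s}{3s}$) produces $\frac13\int_0^1\frac{\ln(1+s)}{s}\,\d s$, so the whole expression collapses to $\frac23\int_0^1\frac{\ln(1+t)}{t}\,\d t$. Expanding $\ln(1+t)=\sum_{n\ge1}(-1)^{n-1}t^n/n$ and integrating term by term yields $\int_0^1\frac{\ln(1+t)}{t}\,\d t=\sum_{n=1}^\infty\frac{(-1)^{n-1}}{n^2}$, and a parity split ($\text{alternating}=\text{full}-2\cdot\text{even}$, with the even part equal to $\tfrac14$ of the full sum) gives $\sum_{n=1}^\infty\frac{(-1)^{n-1}}{n^2}=\tfrac12\sum_{n=1}^\infty\frac1{n^2}$. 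Putting the pieces together, $\sum_{k=1}^\infty\frac{(k-1)!^2}{(2k)!}=\frac23\cdot\frac12\sum_{n=1}^\infty\frac1{n^2}=\frac13\sum_{n=1}^\infty\frac1{n^2}$, which is the assertion.

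I expect the only real obstacles to be (i) spotting the factorization $1-t+t^2=(1+t^3)/(1+t)$ and the self-similar substitution $s=t^3$ it invites, and (ii) carefully justifying the two interchanges of summation and integration; both become routine once the Beta-integral representation is in place.
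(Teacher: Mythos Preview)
Your argument is correct and takes a genuinely different route from the paper's. The paper, following Knopp, stays entirely within elementary series manipulation: with $r_{k,l}=\frac{l!}{k^2(k+1)\cdots(k+l)}$ one has the telescope $r_{k,l-1}-r_{k,l}=\frac{(l-1)!}{k(k+1)\cdots(k+l)}$, hence $\frac{1}{k^2}=\sum_{l=1}^{k}\frac{(l-1)!}{k(k+1)\cdots(k+l)}+r_{k,k}$; after the partial-fraction identity $\frac{1}{k\cdots(k+l)}=\frac{1}{l}\bigl(\frac{1}{k\cdots(k+l-1)}-\frac{1}{(k+1)\cdots(k+l)}\bigr)$ one interchanges the double sum and telescopes in $k$, so the inner sums contribute $2\sum_{l}\frac{(l-1)!^2}{(2l)!}$ while the remainders $r_{k,k}=\frac{(k-1)!^2}{(2k)!}$ contribute one further copy, giving the factor~$3$. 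Your analytic route through the Beta integral and the factorization $1-t+t^2=(1+t^3)/(1+t)$ is arguably more transparent about \emph{why} the constant is $3$ (it is literally the exponent in the self-similar substitution $s=t^3$) and it lands on the standard $\eta$--$\zeta$ parity relation; the trade-off is that you invoke monotone convergence and termwise integration, whereas Knopp's argument never leaves the realm of nonnegative rational series and needs no analysis beyond rearranging an absolutely convergent double sum.
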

\begin{proof}
We follow \cite[p. 265-266]{knopp1954theory}. Observe that for any $k$ and $l$
\[ \frac{1}{k^2} - \frac{0!}{k(k+1)} - \frac{1!}{k(k+1)(k+2)} - \ldots - \frac{l!}{k(k+1)\ldots(k+1+l)} = r_{k,l+1}, \]
where 
\[ r_{k,l} = \frac{l!}{k^2(k+1)\ldots(k+l)}. \]
Hence
\begin{align*}
\frac{1}{k^2} &= \sum_{l=1}^k \frac{(l-1)!}{k(k+1)\ldots(k+l)} + r_{k,k} \\
&= \sum_{l=1}^k \frac{(l-1)!}{l} \left( \frac{1}{k\ldots(k+l-1)} - \frac{1}{(k+1)\ldots(k+l)} \right) + r_{k,k}.
\end{align*} 
Therefore
\begin{align*}
\sum_{k=1}^\infty \frac{1}{k^2}
&= \sum_{k=1}^\infty \left( \sum_{l=1}^k \frac{(l-1)!}{l} \left( \frac{1}{k\ldots(k+l-1)} - \frac{1}{(k+1)\ldots(k+l)} \right) + r_{k,k} \right) \\
&= \left( \sum_{l=1}^\infty \sum_{k=l}^\infty \frac{(l-1)!}{l} \left( \frac{1}{k\ldots(k+l-1)} - \frac{1}{(k+1)\ldots(k+l)} \right)\right) + \sum_{k=1}^\infty r_{k,k} \\
&= \sum_{l=1}^\infty \left( \frac{(l-1)!}{l} \frac{1}{l(l+1) \ldots (2l-1)} + r_{l,l} \right) \\
&= \sum_{l=1}^\infty \left( \frac{(l-1)!^2}{l(2l-1)!} + \frac{(l-1)!}{l(l+1)\ldots(2l)} \right) \\
&= \sum_{l=1}^\infty \left( 2 \frac{(l-1)!^2}{(2l)!} + \frac{(l-1)!^2}{(2l)!}  \right) \\
&= 3 \sum_{l=1}^\infty \frac{(l-1)!^2}{(2l)!}. \qedhere
\end{align*} 
\end{proof}

\bibliographystyle{abbrv}
\bibliography{basel-int}
\end{document}